\pgfplotsset{compat=newest}
\pgfmathsetmacro{\width} {7cm}
\pgfmathsetmacro{\height} {7cm}
	\definecolor{DarkBlue}{rgb}{0.00,0.00,0.55}
	\definecolor{Black}{rgb}{0.00,0.00,0.00}
\journal{Journal of \LaTeX\ Templates}
\definecolor{forestgreen}{rgb}{0.13, 0.55, 0.13}
\newtheorem{theorem}{Theorem}[section]
\newtheorem{corollary}[theorem]{Corollary}
\theoremstyle{definition}
\theoremstyle{remark}
\newtheorem{remark}{Remark}[section]
\definecolor{myblue}{RGB}{135, 206, 250}
\definecolor{myred}{RGB}{255,66,0}
\begin{document}

\begin{frontmatter}

%\title{Computing the action of $\varphi$-functions of matrices with Kronecker structure via geometrically convergent quadrature rules}
\title{Exploiting Kronecker structure in exponential integrators: fast approximation of the action of $\varphi$-functions of matrices via quadrature}
%\tnotetext[mytitlenote]{Fully documented templates are available in the elsarticle package on \href{http://www.ctan.org/tex-archive/macros/latex/contrib/elsarticle}{CTAN}.}

%% Group authors per affiliation:
\author[Oden]{Matteo Croci}
\address[Oden]{Oden Institute for Computational Engineering and Sciences,\\ The University of Texas at Austin, USA}

%% or include affiliations in footnotes:
\author[BCAM,Oden]{Judit Mu\~noz-Matute}
\address[BCAM]{Basque Center for Applied Mathematics (BCAM), Bilbao, Spain}

\begin{abstract}
In this article, we propose an algorithm for approximating the action of $\varphi-$functions of matrices against vectors, which is a key operation in exponential time integrators. In particular, we consider matrices with Kronecker sum structure, which arise from problems admitting a tensor product representation. The method is based on quadrature approximations of the integral form of the $\varphi-$functions combined with a scaling and modified squaring method. Owing to the Kronecker sum representation, only actions of 1D matrix exponentials are needed at each quadrature node and assembly of the full matrix can be avoided.
Additionally, we derive \emph{a priori} bounds for the quadrature error, which show that, as expected by classical theory, the rate of convergence of our method is supergeometric. Guided by our analysis, we construct a fast and robust method for estimating the optimal scaling factor and number of quadrature nodes that minimizes the total cost for a prescribed error tolerance.
We investigate the performance of our algorithm by solving several linear and semilinear time-dependent problems in 2D and 3D. The results show that our method is accurate and orders of magnitude faster than the current state-of-the-art. 
\end{abstract}

\begin{keyword}
$\varphi$-functions\sep Kronecker sum \sep exponential integrators \sep quadrature rules \sep scaling and squaring \sep matrix exponential \sep Gaussian quadrature \sep Clenshaw-Curtis quadrature \sep tensor product structure \sep semilinear parabolic problems
\end{keyword}

\end{frontmatter}

\section{Introduction}

Exponential time integrators \cite{hochbruck2010exponential,hochbruck1998exponential,hochbruck2009exponential,cox2002exponential} are a class of methods for solving stiff semilinear systems of Ordinary Differential Equations (ODEs) of the form $u'(t)+Au(t)=f(t,u(t))$,
where $A$ is a square matrix and $f$ is a nonlinear function. Classical time integration schemes have an exponential scheme counterpart including exponential Runge-Kutta methods \cite{hochbruck2005explicit,kassam2005fourth}, exponential multistep methods \cite{hochbruck2011exponential} or exponential splitting schemes \cite{hansen2009exponential}, among many others. Exponential time-stepping methods incorporate the exact propagator of the homogeneous equation so that linear stability is satisfied by construction. However, such an advantage comes at the cost of having to compute $\varphi$-functions of the matrix $A$. These $\varphi$-functions are defined in terms of integrals of the exponential of $A$ times a polynomial and appear in all exponential integrators. 

The first strategies employed in exponential integrators are based on approximating the whole $\varphi$-function of $A$ \cite{berland2007expint,higham2005scaling,higham2009scaling} (a dense matrix in general), and are thus expensive in terms of both CPU time and memory. In the last decade, new research focused on instead computing the \emph{action} of $\varphi$-functions against a vector \cite{higham2020catalogue}, which is considerably more efficient whenever the matrix $A$ is sparse. Current approaches include rational Padé approximations \cite{al2010new,fasi2019arbitrary}, Krylov subspace methods \cite{gaudreault2018kiops,niesen2012algorithm}, and truncated Taylor series expansion \cite{al2011computing}. These new developments led to the application of exponential integrators in a wide range of applications \cite{crouseilles2020exponential,wang2020exponential,lord2013stochastic}.  

In most applications of exponential integrators the matrix $A$ and the resulting system of ODEs come from the semidiscretization in space of transient Partial Differential Equations (PDEs). In the specific case in which the spatial domain is a box and the coefficients of the PDE are separable, spatial discretizations such as Finite Differences (FD) or Finite Elements (FE) on tensor product grids or Isogeometric Analysis (IGA) \cite{palitta2016matrix} typically lead to a matrix $A$ with Kronecker sum structure, i.e.~$A=A^x\oplus A^y=A^x\otimes I^y+I^x\otimes A^y$ (in 2D). Here $A^{x,y}$ are 1D matrices arising from spatial discretization of the linear operator in a single spatial direction.

It is well known that the exponential of a matrix with Kronecker sum structure is equal to the Kronecker product of the exponentials of the one-dimensional matrices, and this property has been exploited in the literature to design efficient routines for computing matrix exponentials. For instance, in \cite{caliari2022mu} the authors propose an efficient CPU and GPU implementation of the exponential of Kronecker sums of matrices  for problems in arbitrary dimensions. Their method makes the solution of transient linear problems with zero source with exponential integrators extremely efficient, but it does not extend to more general semilinear problems. In fact, $\varphi$-functions of Kronecker sums do not simply separate into the Kronecker product of $\varphi$-functions of 1D matrices, making the tensor structure of the problem difficult to exploit in exponential integrators. Authors in \cite{munoz2022exploiting} recently proposed an algorithm that circumvents this problem by building on recurrence relations between $\varphi$-functions to recast the evaluation problem in terms of the action of 1D $\varphi$-matrix-functions. However, this algorithm does not generalize easily to the 3D case and is numerically unstable for high-order exponential integrators.

%to compute the actions of the $\varphi$-functions of the whole matrix $A$ in terms of the actions of the $\varphi$-functions of the one-dimensional matrices based on algebraic identities. The algorithm, however, involves solving several Sylvester-like equations which happens to be numerically unstable for high-order exponential integrators. Moreover, it is not generalized to the 3D case. 
%exploits the Kronecker sum structure of the matrix while avoiding numerical instability. In this work, we introduce a new method that exploits the Kronecker sum structure of matrix $A$ when approximating the action of $\varphi$-functions against a vector. We focus on the definition of the $\varphi$-functions given in integral form and we consider suitable quadrature rules for the approximation. This approach leads to sum of actions of the exponential of matrix $A$ at the quadrature nodes over vectors. The later can be efficiently computed employing only the actions of exponentials of 1D matrices without assembling the full matrix $A$.

In this paper, we make the following new contributions:
\begin{itemize}
    \item We introduce a new method based on approximating the integral definition of the $\varphi$-functions via both fixed-point and adaptive quadrature (Gauss-Legendre and Clenshaw-Curtis respectively). Our algorithm inherits the numerical stability of quadrature rules and computations at each node are trivially parallelizable and only involve standard matrix exponentials. For this reason, only 1D matrix exponential actions are needed and no assembly of the full matrix $A$ is required.
    \item We provide an \emph{a priori} error analysis for our algorithm that builds on classical and modern theory on scalar quadrature methods \cite{rabinowitz1969rough,trefethen2008gauss,trefethen2019approximation}, and shows that our method converges at a supergeometric rate with respect to the number of nodes. Since our estimate grows exponentially with $\lVert A \rVert_{\infty}$, we combine our method with the scaling and modified squaring strategy from \cite{skaflestad2009scaling} to reduce the size of $\lVert A \rVert_{\infty}$.
    \item We design an algorithm for estimating the optimal scaling factor and number of quadrature nodes of the fixed-point quadrature strategy that minimizes the total cost while satisfying a given error tolerance. This algorithm is based on our theory and essentially only involves scalar and polynomial rootfinding operations which nowadays are robust and efficient numerical procedures. Our adaptive algorithm employs the same estimation routine for the optimal scaling factor, but then adaptively determines the number of nodes required.
\end{itemize}
We test the performance of our method in several linear and semilinear time-dependent problems and we conclude that, for matrices with Kronecker sum structure, our algorithm is accurate and order of magnitudes faster than the generic-purpose state-of-the-art routine from \cite{al2011computing}.

The article is organized as follows: Section \ref{sec:background} introduces the background needed, including the definition and properties of $\varphi$-functions and matrices with Kronecker sum structure, and exponential integrators. In Section \ref{sec:algorithm} we present and analyze our algorithm. We derive an \emph{a priori} quadrature error bound and present a routine for estimating the optimal scaling factor and number of quadrature nodes. In Section \ref{sec:numerics} we study the performance of our method for different 2D and 3D time-dependent linear and semilinear problems. Finally, we summarize our findings in Section \ref{sec:coclusions} and discuss suggestions for future work on the topic.

\section{Background}\label{sec:background}
We first recall the definition of $\varphi$-functions, exponential Runge-Kutta methods and matrices with Kronecker sum structure. 
\subsection{$\varphi$-functions and exponential time integrators}
In this paper we consider the following semilinear system of ODEs as model problem:
\begin{equation}\label{ODEs}
  \displaystyle{ \left\{
      \begin{aligned}
        u'(t)+Au(t)&=f(t,u(t)),\;\;\forall t\in (0,T],\\
        u(0)&=u_0,
      \end{aligned}
    \right.} 
\end{equation}
where A is a square matrix and f is a nonlinear term. Exponential integrators are constructed from different approximations of the integral form of the solution of system \eqref{ODEs}, the \textit{variation-of-constants formula}
\begin{equation}\label{VoC}
u(t)=e^{-tA}u_0+\int_{0}^te^{-(t-s)A}f(s,u(s))\ ds.
\end{equation}
This representation includes the exact propagator of the homogeneous equation (i.e.~for $f=0$) and different approximations of the source term in (\ref{VoC}) lead to different methods.  

The form of expression \eqref{VoC} leads to all exponential integrators being built in terms of the so-called $\varphi-$functions. After defining $\varphi_0(A):=e^{A}$, these are 
\begin{equation}\label{Varphi}
\varphi_p(A):=\int_0^1e^{(1-\theta)A}\frac{\theta^{p-1}}{(p-1)!}\ d\theta,\;\;\forall p\geq 1.
\end{equation}
The $\varphi$-functions satisfy the following recurrence relation
\begin{equation}\label{recurrence}
\varphi_p(A)=A\varphi_{p+1}(A)+\frac{1}{p!}I.
\end{equation}

For the time discretization of \eqref{VoC} with exponential integrators, we consider a uniform partition of the time interval 
$$0=t_0< t_1<\ldots<t_{m-1}<t_m=T,$$
with time step size $\tau=t_{k+1}-t_{k},\;\;\forall k=0,\ldots,m-1$. The simplest first-order exponential Runge-Kutta method is the exponential Euler method
$$u^{k+1}=u^{k}+\tau\varphi_1(-\tau A)\left(f(t_{k},u^{k})-Au^{k}\right),$$
which involves only $\varphi_1$.
This method is obtained by approximating the source term in (\ref{VoC}) by the constant value $f(t_{k},u^{k})$ and employing recurrence formula (\ref{recurrence}). More generally, s-stage exponential Runge-Kutta methods are given by 
\begin{equation}\label{eq:RK}
\displaystyle{ \left\{
\begin{split}
  u^{k+1}&=u^{k}+\tau\sum_{i=1}^{s}b(-\tau A)\left(f(t_k+c_i\tau,\ U^{ki})-Au^{k}\right),\\
  U^{ki}&=u^{k}+\tau\sum_{j=1}^{s}a_{ij}(-\tau A)\left(f(t_k+c_j\tau,\ U^{kj})-Au^{k}\right),\;\;\forall i=1,\ldots,s.
\end{split}
\right.} 
\end{equation}
Here, the coefficients $b_i$ and $a_{ij}$ are expressed in terms of linear combinations of $\varphi-$functions of the matrix $A$. As for traditional Runge-Kutta methods, the coefficients defining the methods \eqref{eq:RK} can be expressed via Butcher tableaus. We refer to \cite{hochbruck2010exponential} for an extensive review of existing methods and their properties. 

\subsection{Kronecker sum structure}
System \eqref{ODEs} often arises from a semi-discretization in space of transient Partial Differential Equations (PDEs). Here, we focus on the specific case in which the matrix $A$ has Kronecker sum structure, i.e.

\begin{equation}\label{KronA}
  \displaystyle{ \left\{
      \begin{split}
        &A=A^x\oplus A^y = A^x\otimes I^y+I^x\otimes A^y\;\;\;(2D),\\
        &A=A^x\oplus A^y \oplus A^z = A^x\otimes I^y\otimes I^z+I^x\otimes A^y\otimes I^z+I^x\otimes I^y\otimes A^z\;\;\;(3D).
      \end{split}
    \right.} 
\end{equation}
Here, $\oplus$ denotes the Kronecker sum and $\otimes$ denotes the Kronecker product, $I^{x,y,z}$ are one-dimensional identity matrices and $A^{x,y,z}$ are the matrices coming from the semidiscretization in each space direction.

The Kronecker sum structure \eqref{KronA} is obtained whenever the PDE has a tensor-product structure: the domain is a box, the PDE coefficients are separable, and the PDE is semidiscretized in space employing Finite Differences (FD), Finite Elements (FE) on tensor product grids, or Isogeometric Analysis (IGA) (see \cite{palitta2016matrix} for details).

%In the algorithm we propose in the next section, we compute the action of the exponential against vectors, i.e., $v=e^Ab$.

It is well known \cite{benzi2017approximation} that the exponential of a matrix with Kroncker sum structure (\ref{KronA}) satisfies the following property
\begin{equation}\label{Expkron}
e^{A}=e^{A^x}\otimes e^{A^y}\;(2D),\;\;\; e^{A}=e^{A^x}\otimes e^{A^y}\otimes e^{A^z}\;(3D).
\end{equation}
A crucial ingredient of the algorithm we propose in the next section is a routine to compute matrix-vector product with $e^A$ efficiently. For this purpose, we exploit the following relations:
\begin{equation}\label{KronAb}
  \displaystyle{ \left\{
      \begin{split}
        &v=(e^{A^x}\otimes e^{A^y})b\Longleftrightarrow V=e^{Ay}Be^{(A^x)^T}\;\;\;(2D),\\
        &v=(e^{A^x}\otimes e^{A^y}\otimes e^{A^z})b\Longleftrightarrow V=B\times_1e^{A^x}\times_2e^{A^y}\times_3e^{A^z}\;\;\;(3D),
      \end{split}
    \right.} 
\end{equation}
where $b=\mbox{vec}(B)$, $v=\mbox{vec}(V)$ and $\mbox{vec}(\cdot)$ is the vectorization operator. In the 2D case in (\ref{KronAb}), V and B are matrices while in 3D they are tensors of order 3. Here we are indicating with $\times_{d}$ with $d\in\{1,2,3\}$ the Tucker operator. Performing matrix-vector products with the exponential as in \eqref{KronAb} is extremely efficient as it only involves dense linear algebra operations with 1D exponential matrices and can be accelerated on GPUs if needed \cite{caliari2022mu}.  We refer to \cite{caliari2022mublas} for a detailed presentation on multidimensional tensor algebra and its efficient implementation.

\begin{remark}
In this article, we only consider 2D and 3D time-dependent PDEs. However, the second equivalence in \eqref{KronAb} holds for matrices with Kronecker sum structure in arbitrary dimensions $d$
$$A=A^1\oplus A^2\oplus\ldots\oplus A^d.$$
While the extension of our algorithm to dimensions higher that $3$ is straightforward, we work in 2D and 3D in this paper for simplicity. 
\end{remark}

\section{New Algorithm}\label{sec:algorithm}
In this section we introduce our algorithm for approximating the action of $\varphi$-functions of matrices. Our method is based on numerical quadrature (both adaptive and fixed-point) combined with a scaling and modified squaring approach. In what follows we also provide an \textit{a priori} error estimate for the quadrature error and we design a robust and efficient strategy for computing the optimal scaling factor and number of quadrature nodes that minimizes costs for a given error tolerance.

\subsection{Approximation of $\varphi$-functions via quadrature}
The relations \eqref{KronAb} lead to an efficient algorithm for computing the action of the matrix exponential. However, \eqref{KronAb} is a direct consequence of property \eqref{Expkron}, which does not hold for the $\varphi$-functions. Our objective is to obtain an efficient algorithm for evaluating actions of $\varphi_p(A)$ for $p>0$ that can still exploit the Kronecker structure in $A$ without performing any full matrix assembly. For this purpose, we rely on equation \eqref{Varphi} to express the action of any $\varphi$-function of a matrix against a vector $b$ as

\begin{align}
\varphi_p(A)b=\int_0^1\frac{\theta^{p-1}}{(p-1)!}e^{(1-\theta)A}b\ d\theta = I_p,\;\;\forall p\geq 1.
\end{align}
Since the above is just a one-dimensional integral of an \emph{analytic} function over a bounded interval, we can approximate it via any suitable $(n+1)$-point 1D quadrature rule:
\begin{align}
\varphi_p(A)b\approx \sum_{i=1}^{n+1} w_i\frac{x_i^{p-1}}{(p-1)!}e^{(1-x_i)A}b = \hat{I}_p,\;\;\forall p\geq 1 ,
\end{align}
where $\{(w_i,x_i)\}_{i=1}^{n+1}$ are the quadrature weights and nodes and the action of the matrix exponential at the nodes can be computed efficiently via \eqref{KronAb}. While any geometrically convergent quadrature scheme is suitable for this purpose, we mainly employ Gauss-Legendre or Clenshaw-Curtis quadrature as they both come with sharp error bounds \cite{trefethen2019approximation} that we can leverage in our analysis. While Gaussian quadrature is more accurate, Clenshaw-Curtis is a nested rule and can therefore be used adaptively with live error estimation and automatic selection of the number of nodes required to achieve a prescribed tolerance. In Section \ref{sec:numerics} we study and compare the performance of both approaches in numerical experiments.

Employing a quadrature rule has three advantages: 1) It converges supergeometrically fast (see next subsection) so only a few matrix-vector products with the exponential are needed. 2) The integrand values at different nodes can be evaluated independently in parallel. 3) The same quadrature rule (and the same matrix-vector products with $e^{(1-x_i)A}$) can be used to compute the actions $\varphi_j(A)b$ for all $j=1,\dots,p$ at the same time with little extra cost. We present our method in Algorithm \ref{alg:1} (fixed-point quadrature version) and in Algorithm \ref{alg:2} (adaptive version).

\begin{algorithm}[h!]
\caption{Fixed-point quadrature algorithm for computing $\varphi_j(A)b$ for $j=1,\dots,p$.}
\label{alg:1}
\begin{enumerate}[leftmargin=*,align=left]
    \item[\textbf{Input:}] An integer $p$, a vector $b$, the matrices $A^{x,y,z}$, and a quadrature rule $\{(w_i,x_i)\}_{i=1}^{n+1}$.
    \begin{itemize}
        \item Compute and store the vectors $v_i=e^{(1-x_i)A}b$ for $i=1,\dots,n+1$ using \eqref{KronAb}.
        \item Compute the vectors $y_j = \sum_{i=1}^n w_i\dfrac{x_i^{j-1}}{(j-1)!}v_i$ for $j=1,\dots,p$.
    \end{itemize}
    \item[\textbf{Output:}] The products $y_j=\varphi_j(A)b$ for $j=1,\dots,p$.
\end{enumerate}
\end{algorithm}

\begin{algorithm}[h!]
\caption{Adaptive quadrature algorithm for computing $\varphi_j(A)b$ for $j=1,\dots,p$.}
\label{alg:2}
\begin{enumerate}[leftmargin=*,align=left]
    \item[\textbf{Input:}] An integer $p$, a vector $b$, the matrices $A^{x,y,z}$, and a relative error tolerance $\varepsilon$.
    \begin{itemize}
        \item Set $n=3$, $\text{err}=\infty$. Run Algorithm \ref{alg:1} with the $(2n+1)$-point Clenshaw-Curtis quadrature rule $\{(w_i,x_i)\}_{i=1}^{2n+1}$ and obtain the approximations $y_j$ for $j=1,\dots,p$, as well as the vectors $v_i$ for $i=1,\dots,2n+1$.
        \item \textbf{While} $\text{err} > \varepsilon$:
    \begin{enumerate}[label=\arabic*)]
        \item Set $n=2n$, $\tilde{y}_j=y_j$ for $j=1,\dots,p$, and $\tilde{v}_i=v_i$ for $i=1,\dots,n+1$. Construct the Clenshaw-Curtis $(2n+1)$-point rule $\{(w_i,x_i)\}_{i=1}^{2n+1}$. Note that the nodes $x_{2i-1}$ for $i=1,\dots,n+1$ and the nodes of the previously constructed $(n+1)$-point rule coincide.
        \item Compute the vectors $v_{2i}=e^{(1-x_{2i})A}b$ for $i=1,\dots,n$ and set $v_{2i-1}=\tilde{v}_i$ for $i=1,\dots,n+1$.
        \item Compute the vectors $\displaystyle{y_j = \sum_{i=1}^{2n+1} w_i\dfrac{x_i^{j-1}}{(j-1)!}v_i}$ for $j=1,\dots,p$.
        \item Update the error: $\text{err} = \max_j \dfrac{\lVert y_j - \tilde{y}_j \rVert_{\infty}}{\lVert y_j \rVert_{\infty}}$.
    \end{enumerate}
    \end{itemize}
    \item[\textbf{Output:}] The products $y_j=\varphi_j(A)b$ for $j=1,\dots,p$.
\end{enumerate}
\end{algorithm}

\begin{remark}
Linear combinations between the actions of different $\varphi$-functions against different vectors can also be computed efficiently as
\begin{align}
    \sum_{j=1}^p\varphi_j(A)b_j = \int_0^1e^{(1-\theta)A}\sum_{j=1}^p\frac{\theta^{j-1}}{(j-1)!}b_j\ d\theta \approx \sum_{i=1}^{n+1} w_ie^{(1-x_i)A}\sum_{j=1}^p\frac{x_i^{j-1}}{(j-1)!}b_j,
\end{align}
where $b_1,\dots,b_p$ are arbitrary vectors. However, we were unable to make this strategy compatible with the generalized scaling and squaring technique of Section \ref{sec:scaling}.
\end{remark}

\subsection{Error bounds}
We now focus on Algorithm \ref{alg:1} only for simplicity, and derive a bound for the quadrature error. For this purpose, we need the following result by Trefethen \cite{trefethen2019approximation}:
\begin{theorem}[Theorem 19.3 in \cite{trefethen2019approximation}]
\label{thm:Trefethen}
Let $E_\rho$ be an open Bernstein ellipse (i.e.~an ellipse with foci at $\pm 1$) with $\rho$ being the sum of its semimajor and semiminor axis lengths. Let a function $f$ be analytic in $[-1,1]$ and analytically continuable to $E_{\rho}$, where it satisfies $|f(z)|\leq M$ for some $M$. Then, $(n+1)$-point Clenshaw-Curtis quadrature with $n\geq 4$ applied to $\displaystyle{I=\int_{-1}^1f(x) \text{ d}x}$ satisfies
\begin{align}
    \label{eq:clenshaw-curtis-bound}
    |I-\hat{I}|\leq \frac{144}{35}\frac{M\rho^{1-n}}{\rho^2-1}.
\end{align}
Here $\hat{I}$ denotes the approximate integral. Furthermore, $(n+1)$-point Gaussian quadrature with $n\geq 2$ satisfies
\begin{align}
    \label{eq:gaussian-quadrature-bound}
    |I-\hat{I}|\leq \frac{144}{35}\frac{M\rho^{-2n}}{\rho^2-1}.
\end{align}
The factor $\rho^{1-n}$ in \eqref{eq:clenshaw-curtis-bound} can be improved to $\rho^{-n}$ if $n$ is even.
\end{theorem}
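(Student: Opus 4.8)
The plan is to reduce everything to the decay of Chebyshev coefficients and the polynomial-exactness of the two rules. First I would expand $f$ in its Chebyshev series $f(x)=\sum_{k=0}^\infty a_k T_k(x)$ on $[-1,1]$. The hypothesis that $f$ is analytic and bounded by $M$ throughout the Bernstein ellipse $E_\rho$ enters through the contour-integral representation of the Chebyshev coefficients: deforming the contour to the boundary of $E_\rho$, where the relevant kernel grows like $\rho^{k}$ while $|f|\le M$, yields the standard decay estimate $|a_k|\le 2M\rho^{-k}$ for $k\ge 1$. This is the single place where analyticity on $E_\rho$ is used, and it converts the qualitative smoothness hypothesis into the quantitative geometric decay that drives the whole bound.

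Next I would write the quadrature error mode by mode. Integrating the series termwise gives
\[
I-\hat{I}=\sum_{k=0}^\infty a_k E_k,\qquad E_k:=\int_{-1}^1 T_k(x)\,dx-\hat{I}[T_k],
\]
so that $|I-\hat{I}|\le\sum_k |a_k|\,|E_k|$, where $\hat{I}[T_k]$ denotes the rule applied to $T_k$. The exact moments are explicit: $\int_{-1}^1 T_k=2/(1-k^2)$ for even $k$ and $0$ for odd $k$. For $(n+1)$-point Gauss quadrature, exactness of degree $2n+1$ kills $E_k$ for every $k\le 2n+1$, while positivity of the weights (which sum to $2$) and $|T_k|\le 1$ give the crude bound $|E_k|\le 4$ for larger $k$. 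For Clenshaw--Curtis the decisive structural fact is \emph{aliasing}: evaluating $T_k$ at the Chebyshev points folds it onto a lower mode, so that $\hat{I}[T_k]$ equals the exact moment of an aliased index $k'\le n$; hence $E_k=0$ for $k\le n$ and $E_k$ is a difference of two explicit moments otherwise.

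Finally I would sum the tail. Substituting $|a_k|\le 2M\rho^{-k}$ and the per-mode bounds into $\sum_k |a_k|\,|E_k|$ leaves a geometric series beginning at $k=2n+2$ in the Gauss case and at $k=n+1$ in the Clenshaw--Curtis case; summing it produces the factors $\rho^{-2n}/(\rho^2-1)$ and $\rho^{1-n}/(\rho^2-1)$ respectively. The parity improvement for Clenshaw--Curtis, replacing $\rho^{1-n}$ by $\rho^{-n}$ when $n$ is even, follows because the symmetric node set integrates odd modes exactly to zero, pushing the first surviving aliased contribution up by one degree and gaining a factor $\rho^{-1}$.

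I expect the main obstacle to be the sharp constant $144/35$ rather than the exponential rate. The crude estimate $|E_k|\le 4$ together with $|a_k|\le 2M\rho^{-k}$ already reproduces the correct $\rho$-dependence but with a larger constant; recovering $144/35$ requires using the exact moment values $2/(1-k^2)$ and, for Clenshaw--Curtis, the precise aliasing map, to bound each $|E_k|$ tightly and to control the resulting tail sum uniformly in $\rho$. Since this is a classical result (Theorem~19.3 in \cite{trefethen2019approximation}), in the paper I would cite it rather than reproduce the constant-tracking, but the Chebyshev-coefficient and aliasing argument sketched above is the route by which it is established.
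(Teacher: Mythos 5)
The paper does not actually prove this statement---it is quoted verbatim as Theorem~19.3 of \cite{trefethen2019approximation} and used as a black box. Your sketch correctly reconstructs the argument from that source (coefficient decay $|a_k|\le 2M\rho^{-k}$ from analyticity on $E_\rho$, polynomial exactness for Gauss and Chebyshev aliasing for Clenshaw--Curtis to annihilate the modes below $2n+2$ and $n+1$ respectively, geometric summation of the even-index tail giving the $(\rho^2-1)^{-1}$ factor and the parity improvement, with the constant $\tfrac{144}{35}=2\bigl(2+\tfrac{2}{35}\bigr)$ coming from the exact moment $2/(k^2-1)$ at the first surviving even mode $k\ge 6$), so it is fully consistent with the paper's treatment.
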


We now employ Theorem \ref{thm:Trefethen} to obtain an error bound for Algorithm \ref{alg:1}. The result is stated in the following theorem and corollary.
\begin{theorem}
\label{thm:error_bound}
For any integer $p\geq 0$, let $I_{p+1}=\varphi_{p+1}(A)b$, and let $\hat{I}_{p+1}$ be the approximation of $I_{p+1}$ computed via Algorithm \ref{alg:1} with a total of $n+1$ quadrature nodes. Then, provided that
$n\geq 4$ for Clenshaw-Curtis quadrature and $n\geq 2$ for Gaussian quadrature, we have that
\begin{align}
    \label{eq:thm_bnd_CC}
    \lVert I_{p+1} - \hat{I}_{p+1} \rVert_{\infty} &\leq \frac{144}{35}\frac{M(\bar{\rho})\bar{\rho}^{1-n}}{\bar{\rho}^2-1},\quad\text{for Clenshaw-Curtis quadrature},\\
    \lVert I_{p+1} - \hat{I}_{p+1} \rVert_{\infty} &\leq \frac{144}{35}\frac{M(\bar{\rho
    })\bar{\rho}^{\hspace{1pt}-2n}}{\bar{\rho}^2-1},\quad\text{for Gaussian quadrature},
    \label{eq:thm_bnd_Gauss}
\end{align}
where $M(\rho)$ is given by
\begin{align}
    M(\rho)= \frac{g(\rho)^p}{2^{p+1}p!}e^{\frac{1}{2}g(\rho)\lVert A \rVert_{\infty}} \lVert b \rVert_{\infty},\quad\text{where}\quad g(\rho)=\frac{(\rho+1)^2}{2\rho},
\end{align}
and $\bar{\rho}$ satisfies $\bar{\rho}>1$ and is a real root of the monic polynomial equation 
\begin{align}
    \label{eq:thm_polyeq}
    \rho^4 + a_3\rho^3+a_2\rho^2+a_1\rho+1 = 0,
\end{align}
whose coefficients are given by
\begin{align}
\label{eq:polycoeffs}
\begin{array}{llll}
    a_3 = - 4\lVert A \rVert_{\infty}^{-1}(n+1-p),& a_2 = -(2+8p\lVert A \rVert_{\infty}^{-1}),& a_1 =  4\lVert A \rVert_{\infty}^{-1}(n-1+p)&\text{Clenshaw-Curtis},\\
    a_3 = - 4\lVert A \rVert_{\infty}^{-1}(2(n+1)-p),& a_2 = -(2+8p\lVert A \rVert_{\infty}^{-1}),& a_1 = 4\lVert A \rVert_{\infty}^{-1}(2n+p),&\text{Gaussian quadrature}.
\end{array}
\end{align}
If $n$ is even we can replace $n$ with $n+1$ for Clenshaw-Curtis quadrature.
\end{theorem}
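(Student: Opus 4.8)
The plan is to reduce the vector-valued quadrature error to the scalar setting of Theorem \ref{thm:Trefethen} by an affine change of variables and an explicit bound on the analytic continuation of the integrand, and then to obtain $\bar{\rho}$ by minimizing the resulting error bound over all admissible ellipse parameters. First I would rewrite $I_{p+1}=\varphi_{p+1}(A)b=\int_0^1\frac{\theta^{p}}{p!}e^{(1-\theta)A}b\, d\theta$ on the reference interval $[-1,1]$ via $\theta=(1+x)/2$, which gives $I_{p+1}=\int_{-1}^1\tilde{f}(x)\,dx$ with $\tilde{f}(x)=\frac{(1+x)^p}{2^{p+1}p!}e^{\frac{1-x}{2}A}b$, and the $(n+1)$-point rule applied to this integral is exactly $\hat{I}_{p+1}$. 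Each component of $\tilde{f}$ is entire, so Theorem \ref{thm:Trefethen} applies componentwise; taking the $\infty$-norm over components reduces the task to producing a single constant $M$ bounding $\lVert\tilde{f}(z)\rVert_\infty$ uniformly on the Bernstein ellipse $E_\rho$.

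The key estimate is this bound on $E_\rho$. The semimajor axis of $E_\rho$ has length $\tfrac12(\rho+\rho^{-1})$, so $|z|\le\tfrac12(\rho+\rho^{-1})$ there, and hence by the triangle inequality $|1\pm z|\le 1+\tfrac12(\rho+\rho^{-1})=g(\rho)$, with both maxima attained at the real endpoints $z=\pm\tfrac12(\rho+\rho^{-1})$. Combining $|1+z|^p\le g(\rho)^p$ with the submultiplicative bound $\lVert e^{\frac{1-z}{2}A}\rVert_\infty\le e^{\frac{|1-z|}{2}\lVert A\rVert_\infty}\le e^{\frac12 g(\rho)\lVert A\rVert_\infty}$ yields $\lVert\tilde{f}(z)\rVert_\infty\le M(\rho)$ with $M(\rho)$ as stated. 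Plugging $M=M(\rho)$ into \eqref{eq:clenshaw-curtis-bound} and \eqref{eq:gaussian-quadrature-bound} produces \eqref{eq:thm_bnd_CC} and \eqref{eq:thm_bnd_Gauss} for every fixed $\rho>1$.

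Since these bounds hold for all admissible $\rho$, the final step is to choose the $\rho$ minimizing the right-hand side, i.e.~to minimize $B(\rho)=\frac{144}{35}\frac{M(\rho)\rho^{1-n}}{\rho^2-1}$ (and its Gaussian analogue with $\rho^{-2n}$). I would set $\frac{d}{d\rho}\log B(\rho)=0$; using $g'(\rho)=\frac{\rho^2-1}{2\rho^2}$ and $\frac{g'(\rho)}{g(\rho)}=\frac{\rho-1}{\rho(\rho+1)}$, the stationarity condition reads
\[
p\,\frac{g'(\rho)}{g(\rho)}+\tfrac12\, g'(\rho)\,\lVert A\rVert_\infty+\frac{1-n}{\rho}-\frac{2\rho}{\rho^2-1}=0.
\]
Clearing denominators over the common factor $4\rho^2(\rho^2-1)$ and dividing by $\lVert A\rVert_\infty$ turns this into the monic quartic \eqref{eq:thm_polyeq} with coefficients \eqref{eq:polycoeffs}; the Gaussian case is identical except that $1-n$ is replaced by $-2n$, which accounts for the different $a_1,a_3$. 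The even-$n$ improvement of Theorem \ref{thm:Trefethen} sharpens the exponent to $\rho^{-n}$, equivalently $n\mapsto n+1$ throughout this derivation.

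I expect the main obstacle to be twofold. The sharp geometric estimate $\max_{z\in E_\rho}|1\pm z|=g(\rho)$ must be identified and recognized as the source of the clean closed form for $M(\rho)$, since it is precisely what makes the subsequent optimization analytically tractable. The second delicate point is the optimization itself: the stationarity equation is a quartic with several real roots, so one must argue that a root $\bar{\rho}>1$ exists and corresponds to the genuine minimizer of $B$ on $(1,\infty)$ rather than to a spurious critical point. This follows because $B$ is smooth and positive with $B(\rho)\to\infty$ both as $\rho\to 1^+$ (from the $1/(\rho^2-1)$ factor, $M(\rho)$ staying finite) and as $\rho\to\infty$ (since $g(\rho)\sim\rho/2$ forces exponential growth that dominates the polynomial decay), so $B$ attains an interior minimum, which is a root of \eqref{eq:thm_polyeq} with $\bar{\rho}>1$.
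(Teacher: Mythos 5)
Your proposal is correct and follows essentially the same route as the paper: map the integral to $[-1,1]$, bound $|1\pm z|\le g(\rho)=\frac{(\rho+1)^2}{2\rho}$ on the Bernstein ellipse to get $M(\rho)$, apply Theorem \ref{thm:Trefethen} componentwise in the $\infty$-norm, and obtain the quartic \eqref{eq:thm_polyeq} by setting the $\rho$-derivative of the logarithm of the bound to zero; your coefficient bookkeeping for \eqref{eq:polycoeffs} checks out. The one step where you genuinely diverge is the existence of a real root $\bar{\rho}>1$: the paper shifts to $Q(x)=P(1+x)$ and invokes Descartes' rule of signs, whereas you argue that $B(\rho)\to\infty$ as $\rho\to 1^+$ and as $\rho\to\infty$, so the smooth positive function $B$ attains an interior minimum whose stationarity is exactly $P(\bar{\rho})=0$. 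Your variant is arguably preferable since it is self-contained and simultaneously certifies that $\bar{\rho}$ is the genuine minimizer of the bound (a point the theorem does not strictly need, as the bound holds for every $\rho>1$, but which Algorithm \ref{alg:quaderr} implicitly relies on when it selects the root minimizing the error estimate).
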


\begin{corollary}
\label{coroll:error_bound}
Under the same assumptions of Theorem \ref{thm:error_bound}, if we further assume that
$n\geq\max\left(4,\lceil\frac{1+\sqrt{2}}{2}\lVert A \rVert_{\infty} + p\right\rceil)$, $(n+1)$-point Clenshaw-Curtis quadrature with $n$ even yields an error of
\begin{align}
    \label{eq:thm_clenshaw_curtis_result}
    \lVert I_{p+1} - \hat{I}_{p+1} \rVert_{\infty} \leq \frac{\lVert b \rVert_{\infty}}{2^{p+1}p!}\left(\frac{n-p}{\frac{e}{2}\lVert A \rVert_{\infty}}\right)^{-(n-p)}.
\end{align}
If $n$ is odd, the bound still holds with $n$ replaced by $n-1$.
Provided that $n\geq\max\left(2,\lceil\frac{1+\sqrt{2}}{4}\lVert A \rVert_{\infty} + \frac{p}{2}\rceil\right)$, $(n+1)$-point Gaussian quadrature instead gives an error of
\begin{align}
    \label{eq:thm_gaussian_result}
     \lVert I_{p+1} - \hat{I}_{p+1} \rVert_{\infty} \leq \frac{\lVert b \rVert_{\infty}}{2^{p+1}p!}\left(\frac{2n-p}{\frac{e}{2}\lVert A \rVert_{\infty}}\right)^{-(2n-p)}.
\end{align}
\end{corollary}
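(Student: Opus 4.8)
The plan is to sidestep the quartic root $\bar\rho$ of Theorem \ref{thm:error_bound} and instead re-apply the Trefethen estimate (Theorem \ref{thm:Trefethen}) at one explicit, near-optimal ellipse parameter $\rho_*$. After mapping $[0,1]$ to $[-1,1]$ the integrand is $f(x)=\frac{(1+x)^p}{2^{p+1}p!}e^{\frac{1-x}{2}A}b$, which is entire, so the hypotheses of Theorem \ref{thm:Trefethen} hold for \emph{every} $\rho>1$ with $M=M(\rho)$; since the bound of Theorem \ref{thm:error_bound} is the minimum of this expression over $\rho$, it is dominated by the value at any admissible $\rho_*$ (equivalently, one may invoke Theorem \ref{thm:Trefethen} directly at $\rho_*$). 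For even $n$ and Clenshaw--Curtis I would take $\rho_*=\frac{4(n-p)}{\lVert A\rVert_\infty}$ (and $\rho_*=\frac{4(2n-p)}{\lVert A\rVert_\infty}$ for Gauss), which is precisely the minimizer of the leading-order surrogate $(\rho/2)^p e^{\rho\lVert A\rVert_\infty/4}\rho^{-n}$. The assumption $n-p\geq\frac{1+\sqrt2}{2}\lVert A\rVert_\infty$ guarantees $\rho_*\geq 2(1+\sqrt2)>1$, so the substitution is admissible and the requirement $n\geq4$ is met.

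The core computation rests on the identity $g(\rho)=\frac{\rho}{2}(1+\rho^{-1})^2$, which separates the two $\rho$-dependent factors cleanly. Writing $\rho_*^{-n}=\rho_*^{-p}\rho_*^{-(n-p)}$ gives $g(\rho_*)^p\rho_*^{-n}=2^{-p}(1+\rho_*^{-1})^{2p}\rho_*^{-(n-p)}$, while the choice of $\rho_*$ makes the exponent explicit: $\tfrac12 g(\rho_*)\lVert A\rVert_\infty=(n-p)+\tfrac12\lVert A\rVert_\infty+\frac{\lVert A\rVert_\infty^2}{16(n-p)}$. Combining $\rho_*^{-(n-p)}$ with the $e^{(n-p)}$ coming from the first exponent term produces exactly the target power $\left(\frac{e\lVert A\rVert_\infty}{4(n-p)}\right)^{n-p}$, and the global prefactor $\frac{\lVert b\rVert_\infty}{2^{p+1}p!}$ is carried through untouched.

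What remains, and where I expect the only real difficulty, is to show that the leftover factors are jointly bounded by $2^{n-p}$ --- precisely the slack between the $\tfrac e4$ in the leading power and the $\tfrac e2$ claimed in the corollary. These factors are the Trefethen prefactor $\frac{144}{35}\frac{1}{\rho_*^2-1}$, the $p$-dependent term $\left(g(\rho_*)/\rho_*\right)^p=\left(\tfrac12(1+\rho_*^{-1})^2\right)^p$, and the residual exponential $e^{\lVert A\rVert_\infty/2+\lVert A\rVert_\infty^2/(16(n-p))}$. Using $n-p\geq\frac{1+\sqrt2}{2}\lVert A\rVert_\infty$ one checks that $\rho_*^2-1\geq 11+8\sqrt2$, so the prefactor is below $1$; that $\tfrac12(1+\rho_*^{-1})^2\leq 1$, so the $p$-dependent term is at most $1$; and that the residual exponent satisfies $\tfrac12\lVert A\rVert_\infty+\frac{\lVert A\rVert_\infty^2}{16(n-p)}\leq\frac{3+\sqrt2}{8}\lVert A\rVert_\infty$, which is strictly smaller than $(n-p)\ln2\geq\frac{1+\sqrt2}{2}\ln2\,\lVert A\rVert_\infty$ because $\frac{3+\sqrt2}{8}<\frac{1+\sqrt2}{2}\ln2$. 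This is exactly the inequality that pins down the constant $\frac{1+\sqrt2}{2}$: the threshold is chosen so that all three corrections fit inside the $e/4$-to-$e/2$ margin simultaneously.

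Finally, the Gaussian estimate is identical after replacing $n-p$ by $2n-p$ everywhere (the factor $\rho^{-2n}$ replaces $\rho^{-n}$ in Theorem \ref{thm:error_bound}), and the hypothesis $n\geq\frac{1+\sqrt2}{4}\lVert A\rVert_\infty+\tfrac p2$ yields the same threshold $2n-p\geq\frac{1+\sqrt2}{2}\lVert A\rVert_\infty$; the odd-$n$ Clenshaw--Curtis case reduces to the even case applied to $n-1$, as stated. The binding configuration is the boundary $n-p=\frac{1+\sqrt2}{2}\lVert A\rVert_\infty$, where all corrections decrease in $n-p$ while $2^{n-p}$ grows, so once the boundary inequality is verified the general case follows; beyond careful bookkeeping of the numerical constants I anticipate no obstruction.
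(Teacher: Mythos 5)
Your proof is correct, but it takes a genuinely different route from the paper's. The paper works on the other side of the trade-off: it first simplifies the bound of Theorem \ref{thm:error_bound} using $g(\rho)\leq\rho$ and $\frac{144}{35}\frac{1}{\rho^2-1}<1$, both valid for $\rho\geq 1+\sqrt{2}$, and then minimizes the resulting clean expression $\frac{\rho^{p-n}}{2^{p+1}p!}e^{\frac{1}{2}\rho\lVert A\rVert_{\infty}}\lVert b\rVert_{\infty}$ exactly; the minimizer is $\tilde{\rho}=2\lVert A\rVert_{\infty}^{-1}(n-p)$, the constant $e/2$ drops out immediately, and the hypothesis $n\geq\frac{1+\sqrt{2}}{2}\lVert A\rVert_{\infty}+p$ serves only to push $\tilde{\rho}$ past $1+\sqrt{2}$ so that the simplifications hold at the minimizer. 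You instead keep the bound intact, substitute the asymptotically optimal $\rho_*=4\lVert A\rVert_{\infty}^{-1}(n-p)$ (twice the paper's $\tilde{\rho}$, reflecting that $g(\rho)\sim\rho/2$ rather than $\rho$), and absorb the three leftover factors into the $2^{n-p}$ gap between the resulting $e/4$ and the claimed $e/2$. I verified your key estimates: the hypothesis gives $\rho_*\geq 2(1+\sqrt{2})$, hence $\rho_*^2-1\geq 11+8\sqrt{2}$ and $\frac{1}{2}(1+\rho_*^{-1})^2\leq 1$, and the residual exponent is at most $\frac{3+\sqrt{2}}{8}\lVert A\rVert_{\infty}\leq\frac{2\sqrt{2}-1}{4}(n-p)<(n-p)\ln 2$, so the product of the three corrections is indeed below $2^{n-p}$; the Gaussian and odd-$n$ cases follow by the substitutions you describe. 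What your route buys is the explicit observation that the corollary's constant is not tight --- the leading-order rate is really $\left(\frac{n-p}{\frac{e}{4}\lVert A\rVert_{\infty}}\right)^{-(n-p)}$ up to lower-order factors --- at the price of noticeably more bookkeeping; the paper's route is shorter but hides that slack inside the inequality $g(\rho)\leq\rho$.
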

\begin{proof}
We prove both Theorem \ref{thm:error_bound} and Corollary \ref{coroll:error_bound} for Gaussian quadrature only since the result for Clenshaw-Curtis quadrature follows the same argument. In order to apply Theorem \ref{thm:Trefethen}, the first step is to map the integral in \eqref{Varphi} onto $[-1,1]$
\begin{align}
    I_{p+1} = \varphi_{p+1}(A)b = \int_{-1}^1 \frac{(s+1)^p}{2^{p+1}p!}e^{\frac{1}{2}(1-s)A}b\text{ d}s=\int_{-1}^1G(s)b\text{ d}s.
\end{align}
The second step is to provide an upper bound for the module of the integrand in $E_\rho$. Since the integrand is vector-valued, we work with the infinity norm to provide an upper bound for all its entries and bound $\lVert G(s)b \rVert_{\infty}$ over $E_\rho$. We have that
\begin{align}
    \lVert G \rVert_{\infty}\leq \max_{s\in E_\rho} \frac{|s+1|^p}{2^{p+1}p!}e^{\frac{1}{2}|1-s|\ \lVert A \rVert_{\infty}}.
\end{align}
Since the maximum of $|1\pm s|$ over $E_{\rho}$ is attained on the rightmost or leftmost points of the ellipse at which $s=\pm\frac{\rho^2+1}{2\rho}$, we get $|1\pm s|\leq \frac{(\rho+1)^2}{2\rho}=g(\rho)$, and a bound for $\lVert G(s)b \rVert_{\infty}$ of
\begin{align}
\label{eq:Gs-bound}
   \lVert G(s)b \rVert_{\infty}\leq \frac{g(\rho)^p}{2^{p+1}p!}e^{\frac{1}{2}g(\rho)\lVert A \rVert_{\infty}} \lVert b \rVert_{\infty} = M(\rho).
\end{align}
Applying Theorem \ref{thm:Trefethen} to each entry of the integrand we obtain that for Gaussian quadrature
\begin{align}
    \label{eq:thm_proof0}
    \lVert I_{p+1} - \hat{I}_{p+1} \rVert_{\infty} \leq \frac{144}{35}\frac{M(\rho)\rho^{-2n}}{\rho^2-1} = \frac{144}{35}\frac{\rho^{-2n}}{\rho^2-1}\frac{g(\rho)^p}{2^{p+1}p!}e^{\frac{1}{2}g(\rho)\lVert A \rVert_{\infty}} \lVert b \rVert_{\infty},\quad\forall\rho>1.
\end{align}
Minimizing with respect to $\rho$ for fixed $p$ we get that
\begin{align*}
    \bar{\rho} = \arg\min_{\rho>1}\frac{144}{35}\frac{\rho^{-2n}}{\rho^2-1}\frac{g(\rho)^p}{2^{p+1}p!}e^{\frac{1}{2}g(\rho)\lVert A \rVert_{\infty}} \lVert b \rVert_{\infty}=\arg\min_{\rho>1}\left(-2n\log(\rho)-\log(\rho^2-1)+p\log(g(\rho))+\frac{1}{2}g(\rho)\lVert A \rVert_{\infty} \right).
\end{align*}
Differentiating the expression in the large brackets with respect to $\rho$ and setting the derivative to zero yields the polynomial equation $P(\rho)=\rho^4 + a_3\rho^3+a_2\rho^2+a_1\rho+1 = 0$ with coefficients
\begin{align}
a_3 = - 4\lVert A \rVert_{\infty}^{-1}(2(n+1)-p),\quad a_2 = -(2+8p\lVert A \rVert_{\infty}^{-1}),\quad a_1 = 4\lVert A \rVert_{\infty}^{-1}(2n+p).
\end{align}
Writing $\rho=1+x$ for $x\in\mathbb{C}$ and applying Descartes' rule of signs to the shifted polynomial
\begin{align}
    Q(x) = P(1+x) = x^4 + (4+a_3)x^3 + (3a_3+a_2+6)x^2  -8\lVert A \rVert_{\infty}^{-1}(2n+3)x - 8\lVert A \rVert_{\infty}^{-1},
\end{align}
it can be verified that the coefficients of $Q(x)$ change sign either once or three times depending on the values of $a_2$ and $a_3$, ensuring that there is always at least one positive real root of $Q(x)$. Hence, there is at least a root $\bar{\rho}$ of $P(\rho)$ that is real and satisfies $\bar{\rho}>1$ for all $n\geq 2$, $\lVert A \rVert_{\infty}>0$ and $p\geq 0$.

The same exact argument also holds for Clenshaw-Curtis quadrature and the thesis of Theorem \ref{thm:error_bound} is thus proved. To derive the bounds in Corollary \ref{coroll:error_bound} we start from equation \eqref{eq:thm_proof0}, which we simplify by noting that $g(\rho) \leq \rho$ and $\frac{144}{35}\frac{1}{\rho^2-1}<1$ for $\rho\geq 1+\sqrt{2}$. After minimizing the result with respect to $\rho$, we obtain
\begin{align}
    \label{eq:thm_proof1}
    \lVert I_{p+1} - \hat{I}_{p+1} \rVert_{\infty} \leq \min_{\rho \geq 1 + \sqrt{2}} \frac{\rho^{-2n+p}}{2^{p+1}p!}e^{\frac{1}{2}\rho\lVert A \rVert_{\infty}} \lVert b \rVert_{\infty}=\frac{\lVert b \rVert_{\infty}}{2^{p+1}p!}\left(\frac{2n-p}{\frac{e}{2}\lVert A \rVert_{\infty}}\right)^{-(2n-p)},
\end{align}
which is \eqref{eq:thm_gaussian_result}. Here in the last passage we used the fact that the minimum is attained at $\rho=\max(1+\sqrt{2},\tilde{\rho})$, where $\tilde{\rho}=2\lVert A\rVert^{-1}_{\infty}(2n-p)$. Note that for the expression on the right in \eqref{eq:thm_proof1} to be decreasing in $n$ we need $n > \frac{1}{4}\lVert A \rVert_{\infty} + \frac{p}{2}$, for which $\tilde{\rho}>1$. Taking $n\geq\frac{1+\sqrt{2}}{4}\lVert A \rVert_{\infty} + \frac{p}{2}$ ensures that $\tilde{\rho}\geq 1+\sqrt{2}$ and that the bound \eqref{eq:thm_proof1} holds. For Clenshaw-Curtis quadrature the same simplifications for $\tilde{\rho}\geq 1+\sqrt{2}$ yield the similar result for even $n$:
\begin{align}
    \label{eq:thm_proof2}
    \lVert I_{p+1} - \hat{I}_{p+1} \rVert_{\infty} \leq \min_{\rho \geq
    1+\sqrt{2}} \frac{\rho^{-n+p}}{2^{p+1}p!}e^{\frac{1}{2}\rho\lVert A \rVert_{\infty}} \lVert b \rVert_{\infty}= \frac{\lVert b \rVert_{\infty}}{2^{p+1}p!}\left(\frac{n-p}{\frac{e}{2}\lVert A \rVert_{\infty}}\right)^{-(n-p)},
\end{align}
where $n$ must be replaced with $n-1$ if $n$ is odd. The bound \eqref{eq:thm_proof2} is \eqref{eq:thm_clenshaw_curtis_result}. In this case, the minimum is attained at $\max(1+\sqrt{2},\tilde{\rho})$, where $\tilde{\rho}=2\lVert A\rVert^{-1}_{\infty}(n-p)$, and for the right-hand side expression to be decreasing in $n$ we now need $n>\frac{1}{2}\lVert A\rVert_{\infty} + p$, for which $\tilde{\rho}>1$. Taking $n\geq\frac{1+\sqrt{2}}{2}\lVert A \rVert_{\infty} + p$ ensures that $\tilde{\rho}\geq 1+\sqrt{2}$ and that the bound \eqref{eq:thm_proof2} holds.
\end{proof}

\begin{remark}
Theorem \ref{thm:error_bound} and Corollary \ref{coroll:error_bound} establish that the rate of convergence of the quadrature rules used to approximate the $\varphi$-functions is supergeometric. 
\end{remark}

The numerical approximation of the roots of a polynomial is nowadays a straightforward, robust, fast, and accurate procedure. Theorem \ref{thm:error_bound} thus inspires a definite recipe to compute an upper bound for the quadrature error and for the minimum number of quadrature nodes required to achieve a given error tolerance. We present the related routines in Algorithms \ref{alg:quaderr} and \ref{alg:quadnodes}.

\begin{algorithm}[h!]\label{}
\caption{\texttt{$E=$ quaderr$(n,p,\alpha,\beta)$}.\\\textbf{Description:} Algorithm for estimating the quadrature error.}
\label{alg:quaderr}
\begin{enumerate}[leftmargin=*,align=left]
    \item[\textbf{Input:}] An integer $p$ corresponding to the maximum value of $p$ for which computing $\varphi_p(A)b$ is required. An estimate $\alpha\approx\lVert A \rVert_{\infty}$, $\beta=\lVert b \rVert_{\infty}$, and a chosen number of quadrature nodes $n+1$.
    \begin{itemize}
        \item Set $E = 0$. Then, \textbf{for} $q=1,\dots,p$ repeat:
        \begin{enumerate}
        \item[1)] Use \eqref{eq:polycoeffs} with $p=q-1$ to compute the coefficients of \eqref{eq:thm_polyeq}.
        \item[2)] Solve \eqref{eq:thm_polyeq} and select $\bar{\rho}>1$ to be the root that minimizes either \eqref{eq:thm_bnd_CC} or \eqref{eq:thm_bnd_Gauss} depending on the quadrature rule used. Store the corresponding error bound in $\bar{E}$ and set $E = \max(E, \bar{E})$.
        \end{enumerate}
    \end{itemize}
    \item[\textbf{Output:}] An upper bound $E$ on the quadrature error for computing $\varphi_q(A)b$ valid for all $q=1,\dots,p$. 
\end{enumerate}
\end{algorithm}

\begin{algorithm}[h!]
\caption{\texttt{$n=$ quadnodes$(\varepsilon,p,\alpha,\beta)$}.\\\textbf{Description:} Algorithm for estimating the number of quadrature nodes.}
\label{alg:quadnodes}
\begin{enumerate}[leftmargin=*,align=left]
    \item[\textbf{Input:}] An integer $p$, an estimate $\alpha\approx\lVert A \rVert_{\infty}$, $\beta=\lVert b \rVert_{\infty}$, and a quadrature error tolerance $\varepsilon$.
    \begin{itemize}
        \item Set $n=2$ for Gaussian quadrature and $n=4$ for Clenshaw-Curtis quadrature, and use Algorithm \ref{alg:quaderr} to compute \texttt{$E=$ quaderr$(n,p,\alpha,\beta)$}.
        \item \textbf{While} $E>\varepsilon$: set $n=2n$ and compute the corresponding error \texttt{$E=$ quaderr$(n,p,\alpha,\beta)$}.
        \item Find the unique\footnotemark\ root of $h(n)=$ \texttt{quaderr$(n,p,\alpha,\beta) - \varepsilon$} in $[n/2,\ n]$ via scalar rootfinding. Set $n=\lceil n \rceil$.
    \end{itemize}
    \item[\textbf{Output:}] An upper bound $n+1$ on the minimum number of nodes required to achieve a quadrature error below $\varepsilon$. 
\end{enumerate}
\end{algorithm}

%\MC{It turns out: footnotes do not work inside algorithms. I hacked it by hand, but we need to make sure the algorithm block does not move around since the footnote page and the algorithm page are now independent.}

\begin{remark}
In both Algorithms \ref{alg:quaderr} and \ref{alg:quadnodes} it is best to work with the logarithm of the error bounds in \eqref{eq:thm_bnd_CC} and \eqref{eq:thm_bnd_Gauss} to avoid possible issues with the numerical range of floating-point numbers.
\end{remark}

\subsection{Scaling and modified squaring method}
\label{sec:scaling}
The bound in Corollary \ref{coroll:error_bound} is less sharp than that in Theorem \ref{thm:error_bound}, and thus less useful in practice. Nevertheless, it is more informative as it clearly shows that the rate of convergence is supergeometric. Furthermore, its proof suggests that the number of quadrature nodes should scale linearly with the size of $\lVert A \rVert_{\infty}$, a phenomenon that we indeed observe heuristically when using Algorithm \ref{alg:quadnodes} to compute a suitable $n$ for a wide range of matrix sizes (results not shown for brevity). As it is common for the matrix exponential \cite{al2010new,skaflestad2009scaling}, we therefore use a scaling approach to reduce the size of $\lVert A \rVert_{\infty}$.
\footnotetext{Since the bounds \eqref{eq:thm_bnd_CC} and \eqref{eq:thm_bnd_Gauss} are monotonically decreasing in $n$.}

First, we compute $\varphi_p(2^{-l}A)b$ for a suitable integer $l$, and then scale the result back by using the modified squaring algorithm from \cite{skaflestad2009scaling}, namely:
\begin{align}
    \label{eq:squaring}
    \varphi_p(2A)b=\frac{1}{2^{p}}\left(e^A\varphi_p(A)b + \sum_{j=1}^p\frac{1}{(p-j)!}\varphi_j(A)b\right),
\end{align}
where the action of the matrix exponential is computed according to \eqref{KronAb}. Our method is well-suited for evaluating \eqref{eq:squaring}: Algorithms \ref{alg:1} and \ref{alg:2} compute all vectors $\varphi_j(A)b$ for $j=1,\dots,p$ at little extra cost.  We present our scaling and modified squaring strategy in Algorithm \ref{alg:3}, where we employ equation \eqref{eq:squaring} in point 1).

As an example, if we choose the scaling $l$ to be $l= \log_2\left(\frac{e}{2}\lVert A \rVert_{\infty}\right)$, Theorem \ref{thm:error_bound} then yields the following bounds for the scaled problem:
\begin{align}
    \begin{array}{lll}
    \lVert I_{p+1} - \hat{I}_{p+1} \rVert_{\infty} \leq c_p\lVert b \rVert_{\infty}(2n-p)^{-(2n-p)},& \forall n>\frac{1}{2}(p+1), & \text{for Gaussian quadrature},\\
    \lVert I_{p+1} - \hat{I}_{p+1} \rVert_{\infty} \leq c_p\lVert b \rVert_{\infty}(n-p)^{-(n-p)}, & \forall n>p+1, &\text{for Clenshaw-Curtis quadrature},
    \end{array}
\end{align}
where $c_p=(2^{p+1}p!)^{-1}$ and we assumed $n$ is even in the Clenshaw-Curtis rule. For both quadrature rules and $p\leq20$, a quick computation yields that $21$ quadrature nodes are sufficient to reduce the error below $10^{-20}\lVert b \rVert_{\infty}$.

In practice, such a scaling choice may be excessive and lead to a high squaring cost as well as to loss of significant digits. In fact, while \eqref{eq:squaring} was reported in \cite{skaflestad2009scaling} to be resilient to rounding error accumulation, when $A$ is non-normal excessive squaring may still lead to rounding error accumulation similarly as for the matrix exponential \cite{al2010new,higham2005scaling}.

Motivated by these considerations, we thus design an algorithm that helps balancing scaling and computational expense by calculating the optimal scaling factor that minimizes the total cost. The resulting routine is presented in Algorithm \ref{alg:optimalscaling}, where  we rely on Theorem \ref{thm:error_bound} and Algorithms \ref{alg:quaderr} and \ref{alg:quadnodes} to numerically estimate the optimal values of $l$ and $n$. Algorithm \ref{alg:optimalscaling} is based on modelling the total cost of our algorithm as follows: Let $n+1$ is the final number of quadrature nodes used and let $d$ be the spatial dimension. Then our method requires the computation of $dn$ 1D matrix exponentials, and $n+lp$ matrix-vector products as in \eqref{KronAb}. Since the optimal scaling factor depends on the relative cost of these two operations, we take the total cost to be given by $C(n,l,p)=c_1dn + c_2(n+lp)$ for some suitable positive constants $c_1$ and $c_2$ that are architecture-dependent and must be estimated. In the numerical experiments of Section \ref{sec:numerics} we take $c_1=0$ and $c_2=1$ for simplicity.

\begin{algorithm}[h!]
\caption{\texttt{$(\bar{l},\bar{n},\bar{C}) =$ setup\_quadrature$(\varepsilon,p,\alpha, \beta)$}.\\\textbf{Description:} Algorithm for estimating the optimal scaling and number of quadrature nodes.}
\label{alg:optimalscaling}
\begin{enumerate}[leftmargin=*,align=left]
    \item[\textbf{Input:}] An integer $p$, an estimate $\alpha\approx\lVert A \rVert_{\infty}$, and $\beta=\lVert b \rVert_{\infty}$, and a quadrature error tolerance $\varepsilon$.
    \begin{itemize}
        \item Set $l_{\max}= \lceil \log_2(\alpha)\rceil$ and $\bar{n},\bar{l},\bar{C}=\infty$
        \item \textbf{For} $l=l_{\max},l_{\max}-1,\dots,0$:
        \begin{enumerate}
            \item[1)] Set $\alpha_l=2^{-l}\alpha$ and compute \texttt{$n=$ quadnodes$(\varepsilon,p,\alpha_l,\beta)$} (cf.~Algorithm \ref{alg:quadnodes}).
            \item[2)] \textbf{If} $C(n,l,p)<\bar{C}$\textbf{:} set $\bar{C}=C(n,l,p)$, $\bar{l}=l$, and $\bar{n}=n$. \textbf{Else:} \textbf{break}.
        \end{enumerate}
    \end{itemize}
    \item[\textbf{Output:}] The optimal scaling $\bar{l}$, the corresponding number of quadrature nodes $\bar{n}+1$, and the total cost $\bar{C}$ required to achieve a quadrature error below the tolerance $\varepsilon$.
\end{enumerate}
\end{algorithm}

The reason why we can stop searching in Algorithm \ref{alg:optimalscaling} if $C(n,l,p)>\bar{C}$ is that decreasing the scaling factor causes $n$ to monotonically increase. Therefore $C(n,l,p)$ is convex in $l$ and it will start increasing only after $l$ decreases beyond its minimum.

We remark that when the $A^{x,y,z}$ matrices are sparse computing the infinity norm of $A$ can be typically done efficiently. When $A$ is instead dense, it is possible to estimate its infinity norm via the upper bound
\begin{align}
\label{eq:infnorm_upper_bnd}
\lVert A \rVert_{\infty}\leq \lVert A^x \rVert_{\infty} + \lVert A^y \rVert_{\infty} + \lVert A^z \rVert_{\infty},\quad\text{(with }A^z = 0 \text{ in 2D)}.
\end{align}

\begin{algorithm}[h!]
\caption{\texttt{$Y=$ phiquadmv$(p,A^{x,y,z},b,\alpha,\varepsilon,l,$type$)$}.\\\textbf{Description:} Scaling and modified squaring algorithm for computing $\varphi_j(A)b$ for $j=1,\dots,p$.}
\label{alg:3}
\begin{enumerate}[leftmargin=*,align=left]
    \item[\textbf{Input:}] An integer $p$, the matrices $A^{x,y,z}$ and a vector $b$.
    \item[\textbf{Optional input:}] An estimate $\alpha$ of the infinity norm of $A$ (i.e.~$\alpha\approx \lVert A \rVert_{\infty}$, default: use \eqref{eq:infnorm_upper_bnd}). A tolerance $\varepsilon$ (default: $10^{-14}$). A scaling $l$ (will be estimated if not provided). An integer variable \texttt{type} with values $1$ or $2$ depending on whether Algorithm \ref{alg:1} or \ref{alg:2} is to be employed (default: \texttt{type} $=1$).
    \begin{itemize}
        \item If $l$ not provided, set $\beta=\lVert b \rVert_{\infty}$ and use Algorithm \ref{alg:optimalscaling} to compute \texttt{$(l,n,-)=$ setup\_quadrature$(\varepsilon,p,\alpha,\beta)$}.
        \item Depending on the value of $\texttt{type}$, apply either Algorithm \ref{alg:1} (with Gaussian quadrature using $n+1$ nodes), or Algorithm \ref{alg:2} (with Clenshaw-Curtis adaptive quadrature using $\varepsilon$ as tolerance) to $2^{-l}A$ and obtain $y_j=\varphi_j(2^{-l}A)b$ for $j=1,\dots,p$. Store the matrix exponentials $\exp({2^{-l}A^{x,y,z}})$ used in Algorithm \ref{alg:1} or \ref{alg:2}.
        \item \textbf{For} $k = 0,\dots,l-1$:
        \begin{enumerate}[label=\arabic*)]
            \item Compute $\displaystyle{\hat{y}_i = 2^{-i}\left(e^{2^{k-l}A} y_i + \sum_{j=1}^i\frac{1}{(i-j)!}y_j\right)}$ for $i=1,\dots,p$ using \eqref{KronAb} to compute the $e^{2^{k-l}A} y_i$ terms.
            \item Set $y_j = \hat{y}_j$ for $j=1,\dots,p$. If $k<l-1$, compute $\exp({2^{k+1-l}A^{x,y,z}})=\exp({2^{k-l}A^{x,y,z}})^2$.
        \end{enumerate}
    \end{itemize}
    \item[\textbf{Output:}] The matrix $Y$ such that its $j$-th column is given by the product $y_j=\varphi_j(A)b$ for $j=1,\dots,p$.
\end{enumerate}
\end{algorithm}

With Algorithm \ref{alg:3} we have two options: either employ a direct approach using Gaussian quadrature for a fixed number of points (as in Algorithm \ref{alg:1}) determined from Theorem \ref{thm:error_bound} and Algorithm \ref{alg:optimalscaling}, or employ an adaptive strategy with Clenshaw-Curtis (or another nested rule such as Gauss-Kronrod) as in Algorithm \ref{alg:2}. 
The former approach employs Gaussian quadrature which converges faster, but it comes with no error estimation and relies on the upper bound from \eqref{eq:thm_gaussian_result} which may be an over-estimate. On the other hand, the adaptive strategy uses Clenshaw-Curtis quadrature, but it comes with adaptivity which might improve performance. In the next section we test both methods in practice to determine which one is the most efficient.

\section{Numerical results}\label{sec:numerics}

We now compare the performance of our method in terms of computational time and approximation errors with the state-of-the-art MATLAB routine \texttt{expmv$()$} from Higham et al. \cite{al2011computing} for different problems. We use \texttt{phiquadmv$()$} (i.e.~Algorithm \ref{alg:3}) using either Gaussian quadrature with a fixed number of nodes (i.e.~\texttt{type} $=1$) or adaptive Clenshaw-Curtis quadrature (i.e.~\texttt{type} $=2$). We use the open source software library Chebfun\footnote{Available at \url{https://www.chebfun.org/}.} \cite{driscoll2014chebfun} to compute all required quadrature nodes and weights, and the \texttt{tucker.m} routine from the open source software KronPACK\footnote{Available at \url{https://github.com/caliarim/KronPACK}.} from \cite{caliari2022mu} for the 3D tensor operations required by \eqref{KronAb}. We set the tolerance for the quadrature error to the default value (i.e. $\varepsilon=10^{-14}$) and we employ $C(n,l,p)=n+lp$ in Algorithm \ref{alg:optimalscaling} (i.e. $c_1=0$ and $c_2=1$). In this section, we denote these routines with \texttt{phiquadmv\_gauss$()$} and \texttt{phiquadmv\_cc$()$}, respectively.

In all examples we employ a FE semidiscretization in space with piecewise linear functions and a 2-point Lobatto quadrature to obtain diagonal mass matrices. All the experiments were performed using Matlab version \verb|r2021b| using a single computational thread of an Intel i5-8279U chip with 16GB of RAM via the option \verb|-singleCompThread|.

Our main code is available at \url{https://github.com/jmunoz022/phiquadmv} and the routines for reproducing the results presented in this article are available at \url{https://github.com/jmunoz022/phiquadmv_paper}.

\subsection{Problem 1 - Heat equation in 3D}
We consider the 3D heat equation in $\Omega=(0,1)^3$ for $0\leq t\leq T=1$,
$$u_t-\Delta u=f(x,y,z,t),$$
with homogeneous Dirichlet boundary conditions. The matrix $A$ in this case comes from the semidiscretization of the Laplacian operator and is symmetric positive-definite. We consider a uniform spatial discretization using the same number of elements in each spatial direction so that the matrices $A^{x,y,z}$ have the same dimension. 

Here we set the timestep $\tau=1/8$ and we compute the action of $\varphi_p(-\tau A)$ against the vector $b$ obtained by evaluating the function $u_0(x,y,z)=\sin(\pi x)\sin(\pi y)\sin(\pi z)$ at the nodal points. We monitor the following relative error measure for every value of $p$
\begin{equation}\label{RelativeError}
  \frac{\lVert v_p-v^{kron}_p\rVert_\infty}{\lVert v_p\rVert_\infty},
\end{equation}
where $v_p$ are the actions $\varphi_p(-\tau A)b$ computed with  \texttt{expmv()}, and $v^{kron}_p$ the actions computed with either \texttt{phiquadmv\_gauss()} or \texttt{phiquadmv\_cc()}.

Figure \ref{ErrorsLaplace} shows the relative errors (\ref{RelativeError}) for $p=1,\ldots,20$ and different sizes of the matrix $A$. We select a number of $2^r$ elements in each space dimension with $r=4,\ldots,7$. In Table \ref{TimesLaplace} we compare the computational times in seconds required to compute all 20 actions with \texttt{phiquadmv\_gauss()}, \texttt{phiquadmv\_cc()} and routine \texttt{expmv()}. Table \ref{PointsLaplace} shows the number of quadrature points, the scaling factor 
and the total cost of employing both routines. 

We conclude that both \texttt{phiquadmv} routines perform similarly, are accurate (with relative errors below $10^{–12}$), and are orders of magnitude faster than routine \texttt{expmv()}. In particular, for a matrix of size near to 2 million, \texttt{expmv()} routine required 12.5 hours to compute all actions while \texttt{phiquadmv\_gauss()} and \texttt{phiquadmv\_cc()} took only 25 and 31 seconds (1750 and 1424 times faster), respectively.

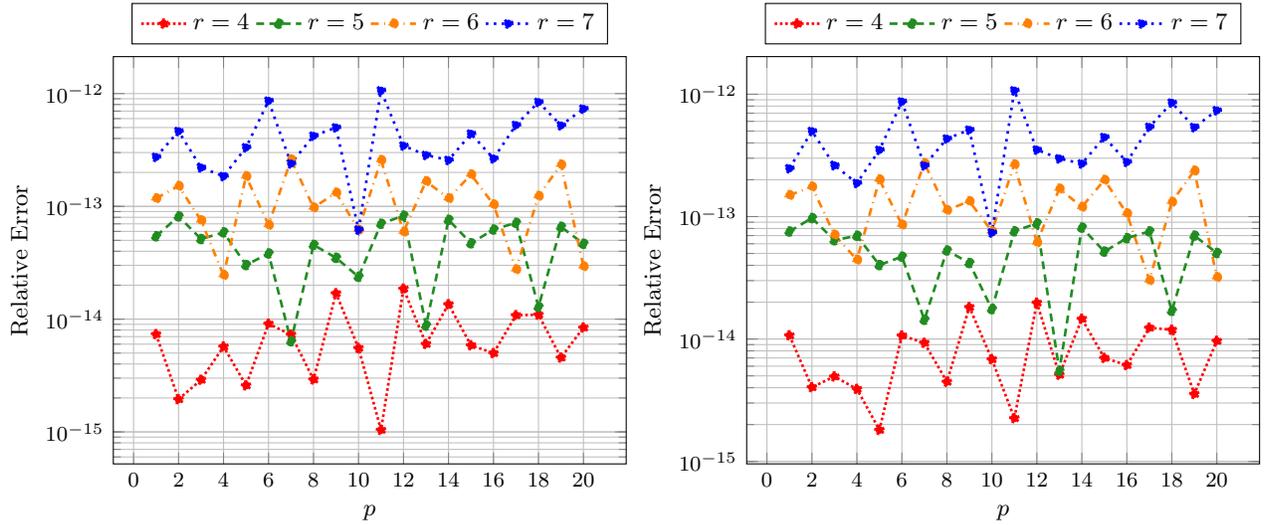
\begin{figure}[h!]
\centering
\begin{tikzpicture}
\begin{axis}[small,
	width=1.2*\width,
	height=\height,
	ymode=log,
	xlabel=$p$,
	ylabel=Relative Error,
	grid=both,
    %ymax=10^-10,
    legend style={font=\small,at={(0.5,1.03)},anchor=south},
    legend entries={$r=4$,$r=5$, $r=6$, $r=7$},
	legend columns=4
	]
\addplot[
	color=red,densely dotted,line width=1pt, 
	mark=*,mark size=1.5pt,    
	]
table[x=q,y=Error_gauss]{Results/Results_Laplace/ErrLaplace_r4_m3.txt};
\addplot[
	color=forestgreen,densely dashed,line width=1pt,
	mark=*,mark size=1.5pt,    
	]
table[x=q,y=Error_gauss]{Results/Results_Laplace/ErrLaplace_r5_m3.txt};
\addplot[
	color=orange,dashdotted,line width=1pt, 
	mark=*,mark size=1.5pt,    
	]
table[x=q,y=Error_gauss]{Results/Results_Laplace/ErrLaplace_r6_m3.txt};
\addplot[
	color=blue,dotted,line width=1pt, 
	mark=*,mark size=1.5pt,    
	]
table[x=q,y=Error_gauss]{Results/Results_Laplace/ErrLaplace_r7_m3.txt};
\end{axis}
\end{tikzpicture}
\begin{tikzpicture}
\begin{axis}[small,
	width=1.2*\width,
	height=\height,
	ymode=log,
	xlabel=$p$,
	ylabel=Relative Error,
	grid=both,
    %ymax=10^-10,
    legend style={font=\small,at={(0.5,1.03)},anchor=south},
    legend entries={$r=4$,$r=5$, $r=6$, $r=7$},
	legend columns=4
	]
\addplot[
	color=red,densely dotted,line width=1pt, 
	mark=*,mark size=1.5pt,    
	]
table[x=q,y=Error_cheb]{Results/Results_Laplace/ErrLaplace_r4_m3.txt};
\addplot[
	color=forestgreen,densely dashed,line width=1pt,
	mark=*,mark size=1.5pt,    
	]
table[x=q,y=Error_cheb]{Results/Results_Laplace/ErrLaplace_r5_m3.txt};
\addplot[
	color=orange,dashdotted,line width=1pt, 
	mark=*,mark size=1.5pt,    
	]
table[x=q,y=Error_cheb]{Results/Results_Laplace/ErrLaplace_r6_m3.txt};
\addplot[
	color=blue,dotted,line width=1pt, 
	mark=*,mark size=1.5pt,    
	]
table[x=q,y=Error_cheb]{Results/Results_Laplace/ErrLaplace_r7_m3.txt};
\end{axis}
\end{tikzpicture}
\caption{Relative error of computing $\varphi_p(-\tau A)b$ with \texttt{phiquadmv\_gauss()} (left) and \texttt{phiquadmv\_cc()} (right) for Problem 1, $p=1,\ldots,20$ and $r=4,\ldots,7$.}
\label{ErrorsLaplace}
\end{figure}

\begin{table}[h!]
%\begin{adjustbox}{width=1\textwidth}
\centering
\begin{tabular}{|c|c|c|c|}\hline
Size of A&\texttt{phiquadmv\_gauss()}&\texttt{phiquadmv\_cc()}&\texttt{expmv()}\\\hline
3375&0.19&0.11&1.74\\
29791&0.95&1.15&31.16\\
250047&1.34&1.79&1018.58\\
2048383&25.63&31.50&44855.12\\\hline
\end{tabular}
%\end{adjustbox}
\caption{Computational time in seconds for computing $\varphi_p(-\tau A)b$ in Problem 1 with $p=1,\ldots,20$.}
\label{TimesLaplace}
\end{table}

\begin{table}[h!]
%\begin{adjustbox}{width=1\textwidth}
\centering
\begin{tabular}{|c|c|c|c|c|c|c|c|}\hline
&& \multicolumn{3}{c|}{\texttt{phiquadmv\_gauss()}} &  \multicolumn{3}{c|}{\texttt{phiquadmv\_cc()}}\\\hline
Size of A&$\lVert A\rVert_\infty$&$\bar{n}$&$\bar{C}$&$\bar{l}$&$\bar{n}$&$\bar{C}$&$\bar{l}$\\\hline
3375&384&37&97&3&49&129&4\\
29791&1536&37&137&5&49&169&6\\
250047&6144&37&177&7&49&209&8\\
2048383&24576&37&217&9&49&249&10\\\hline
\end{tabular}
%\end{adjustbox}
\caption{Number of quadrature nodes $\bar{n}$, scaling factor $\bar{l}$ and cost $\bar{C}$ employed for Problem 1 with $\max p=20$.}
\label{PointsLaplace}
\end{table}

\subsection{Problem 2 - Advection-diffusion problem with a Sishkin mesh}
We now consider the 2D Eriksson-Johnson problem over $\Omega=(-1,0)\times(-0.5,0.5)$ for $0\leq t\leq T=1$ as presented in \cite{munoz2022exploiting}. Here, the matrix $A$ comes from the semidiscretization of the advection-diffusion operator
$$u_t+\gamma\cdot\nabla u-\epsilon\Delta u=f(x,y,t),$$
with both Neumman and Dirichlet boundary conditions
\begin{equation}\nonumber
  \displaystyle{\left\{
      \begin{split}
        &\frac{\partial}{\partial x}u(-1,y,t)=10e^{-4t}\left(y^2-0.25\right)+\frac{r_{1}e^{-r_{1}}-r_{2}e^{-r_{2}}}{e^{-r_{1}}-e^{-r_{2}}}\cos(\pi y),\\
        &u(0,y,t)=u(x,-0.5,t)=u(x,0.5,t)=0,\\
      \end{split}
    \right.} 
\end{equation}
where $\gamma=(1,0)$, $\epsilon=10^{-2}$,
$r_{1,2}=\frac{1\pm\sqrt{4\pi^{2}\epsilon^{2}}}{2\epsilon}$. We now set the vector $b$ with the nodal values of the initial condition $u(x,y,0)=10x\left(y^2-0.25\right)+\frac{e^{r_{1}x}-e^{r_{2}x}}{e^{-r_{1}}-e^{-r_{2}}}\cos(\pi y).$

As in \cite{munoz2022exploiting}, we select a Sishkin mesh (i.e. a graded, piecewise-uniform mesh in the $x$ direction designed to capture the boundary layer, cf.~\cite{kopteva2010shishkin}) with $2^r$ elements in each space dimension. As a consequence of the mesh structure and of the presence of an advection field, the matrices $A$ and $A^{x,y}$ are non-symmetric. Furthermore, $A^{x,y}$ also have different dimensions since we remove the boundary degrees-of-freedom corresponding to the Dirichlet boundary conditions. We again set $\tau=1/8$ and compute $\varphi_p(-\tau A)b$ for $p=1,\dots,20$ and $r=5,\ldots,9$ with both \texttt{phiquadmv\_gauss()} and \texttt{phiquadmv\_cc()}. 

We display in Figure \ref{ErrorsAdvDiff} the relative errors, in Table \ref{TimesAdvDiff} the computational times in seconds and in Table \ref{PointsAdvDiff} the number of quadrature nodes, scaling and total cost of each routine. We conclude that even for this non-symmetric problem, both \texttt{phiquadmv} routines are accurate and faster than \texttt{expmv()}. On the largest matrix, \texttt{expm()} takes 2.18 hours to evaluate the actions while our routines respectively take 7.53 and 11.64 seconds and are 1045 and 676 times faster.

\begin{figure}[h!]
\centering
\begin{tikzpicture}
\begin{axis}[small,
	width=1.17*\width,
	height=\height,
	ymode=log,
	xlabel=$p$,
	ylabel=Relative Error,
	grid=both,
    %ymax=10^-8,
    legend style={font=\small,at={(0.5,1.03)},anchor=south},
    legend entries={$r=5$,$r=6$, $r=7$, $r=8$,$r=9$},
    legend columns=5
	]      
\addplot[
	color=red,densely dotted,line width=1pt, 
	mark=*,mark size=1.5pt,    
	]
table[x=q,y=Error_gauss]{Results/Results_AdvDiff/ErrAdvDiff_r5_m3.txt};
\addplot[
	color=forestgreen,densely dashed,line width=1pt, 
	mark=*,mark size=1.5pt,    
	]
table[x=q,y=Error_gauss]{Results/Results_AdvDiff/ErrAdvDiff_r6_m3.txt};
\addplot[
	color=orange,dashdotted,line width=1pt, 
	mark=*,mark size=1.5pt,    
	]
table[x=q,y=Error_gauss]{Results/Results_AdvDiff/ErrAdvDiff_r7_m3.txt};
\addplot[
	color=blue,dotted,line width=1pt, 
	mark=*,mark size=1.5pt,    
	]
table[x=q,y=Error_gauss]{Results/Results_AdvDiff/ErrAdvDiff_r8_m3.txt};
\addplot[
	color=black,dashed,line width=1pt, 
	mark=*,mark size=1.5pt,    
	]
table[x=q,y=Error_gauss]{Results/Results_AdvDiff/ErrAdvDiff_r9_m3.txt};
\end{axis}
\end{tikzpicture}
\begin{tikzpicture}
\begin{axis}[small,
	width=1.17*\width,
	height=\height,
	ymode=log,
	xlabel=$p$,
	ylabel=Relative Error,
	grid=both,
    %ymax=10^-8,
    legend style={font=\small,at={(0.5,1.03)},anchor=south},
    legend entries={$r=5$,$r=6$, $r=7$, $r=8$,$r=9$},
    legend columns=5
	]      
\addplot[
	color=red,densely dotted,line width=1pt, 
	mark=*,mark size=1.5pt,    
	]
table[x=q,y=Error_cheb]{Results/Results_AdvDiff/ErrAdvDiff_r5_m3.txt};
\addplot[
	color=forestgreen,densely dashed,line width=1pt, 
	mark=*,mark size=1.5pt,    
	]
table[x=q,y=Error_cheb]{Results/Results_AdvDiff/ErrAdvDiff_r6_m3.txt};
\addplot[
	color=orange,dashdotted,line width=1pt, 
	mark=*,mark size=1.5pt,    
	]
table[x=q,y=Error_cheb]{Results/Results_AdvDiff/ErrAdvDiff_r7_m3.txt};
\addplot[
	color=blue,dotted,line width=1pt, 
	mark=*,mark size=1.5pt,    
	]
table[x=q,y=Error_cheb]{Results/Results_AdvDiff/ErrAdvDiff_r8_m3.txt};
\addplot[
	color=black,dashed,line width=1pt, 
	mark=*,mark size=1.5pt,    
	]
table[x=q,y=Error_cheb]{Results/Results_AdvDiff/ErrAdvDiff_r9_m3.txt};
\end{axis}
\end{tikzpicture}
\caption{Relative error of computing $\varphi_p(-\tau A)b$ with \texttt{phiquadmv\_gauss()} (left) and \texttt{phiquadmv\_cc()} (right) for Problem 2, $p=1,\ldots,20$ and $r=5,\ldots,9$.}
\label{ErrorsAdvDiff}
\end{figure}
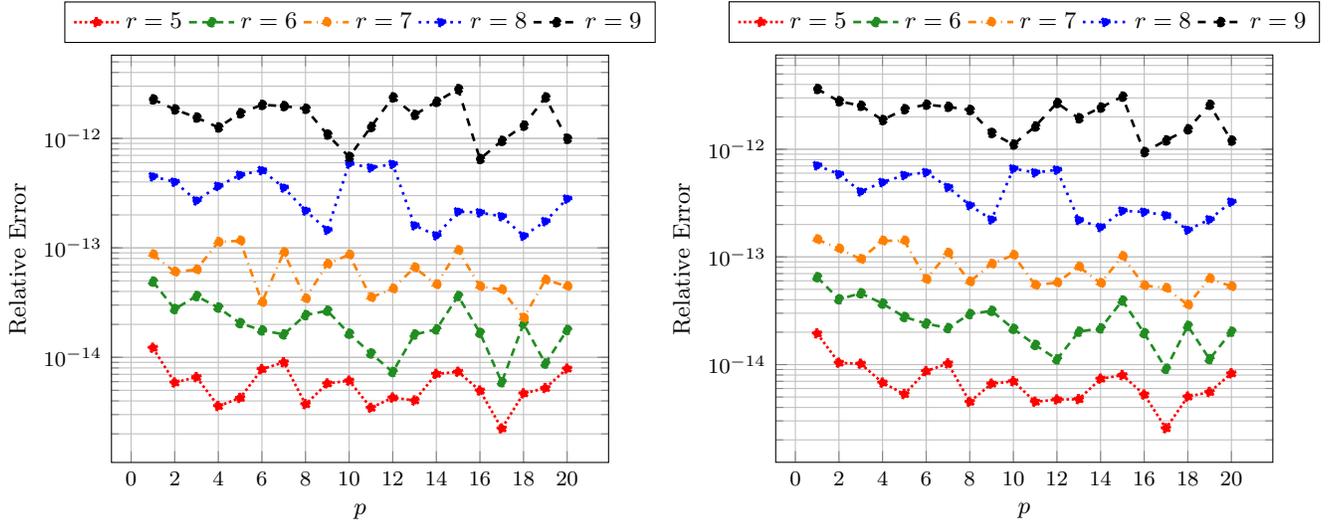

\begin{table}[h!]
%\begin{adjustbox}{width=1\textwidth}
\centering
\begin{tabular}{|c|c|c|c|}\hline
Size of A&\texttt{phiquadmv\_gauss()}&\texttt{phiquadmv\_cc()}&\texttt{expmv()}\\\hline
992&0.20&0.20&0.51\\
4032&0.12&0.20&2.03\\
16256&0.24&0.49&30.20\\
65280&1.30&2.248&461.41\\
261632&7.53&11.64&7869.39\\\hline
\end{tabular}
%\end{adjustbox}
\caption{Computational times in seconds for computing $\varphi_p(-\tau A)b$ in Problem 2 with $p=1,\ldots,20$.}
\label{TimesAdvDiff}
\end{table}

\begin{table}[h!]
%\begin{adjustbox}{width=1\textwidth}
\centering
\begin{tabular}{|c|c|c|c|c|c|c|c|}\hline
&& \multicolumn{3}{c|}{\texttt{phiquadmv\_gauss()}} &  \multicolumn{3}{c|}{\texttt{phiquadmv\_cc()}}\\\hline
Size of A&$\lVert A\rVert_\infty$&$\bar{n}$&$\bar{C}$&$\bar{l}$&$\bar{n}$&$\bar{C}$&$\bar{l}$\\\hline
992&332.8&34&94&3&97&177&4\\
4032&1331.2&34&134&5&97&217&6\\
16256&5324.8&34&174&7&97&257&8\\
65280&21299.2&34&214&9&97&297&10\\
261632&85196.8&34&254&11&97&337&12\\\hline
\end{tabular}
%\end{adjustbox}

\caption{Number of quadrature nodes $\bar{n}$, scaling factor $\bar{l}$ and cost $\bar{C}$ employed for Problem 2 with $\max p=20$.}
\label{PointsAdvDiff}
\end{table}

\begin{remark}
We note that in Figures \ref{ErrorsLaplace} and \ref{ErrorsAdvDiff} the approximation error is small, yet above the prescribed tolerance of $\varepsilon=10^{-14}$. Even assuming that the \texttt{expmv()} routine is exact, this behavior is likely a consequence of rounding errors, which our analysis from Section \ref{sec:algorithm} does not account for. In particular, independently from the scaling factor used, we cannot expect to reduce the error below the condition number of the problem times the unit roundoff of the floating-point format used. While the condition number of computing $\varphi$-functions of matrices has not, to the best of our knowledge, been investigated, we know for instance that for the matrix exponential (cf.~Lemma 10.15 in \cite{higham2008functions}) this is at least as big as $\lVert A \rVert_{\infty}$. Looking at the size of $\lVert A \rVert_{\infty}$ in Tables \ref{PointsLaplace} and \ref{PointsAdvDiff}, we can then expect to lose a few digits in our computations.
\end{remark}

\subsection{Problem 3 - Hochbruck-Ostermann equation}
We consider the semilinear Hochbruch-Ostermann equation from \cite{hochbruck2005explicit} over $\Omega=(0,1)^2$ and $0\leq t\leq T=1$
$$u_t-\Delta u=\frac{1}{1+u^2}+f(x,y,t),$$
subject to homogeneous Dirichlet boundary conditions. Here, we select the linear source $f$ and the initial condition $u_0$ using the method of manufactured solutions in such a way that the exact solution is $u(x,y,t)=x(1-x)y(1-y)e^t$. 

We compare the performance of our algorithm with three exponential Runge-Kutta methods from \cite{hochbruck2005explicit} defined by the Butcher tableaus in Table \ref{RK} (in which we denote $\varphi_{i,j}:=\varphi_{i}(-c_j\tau A)$). We select $c_2=\frac{1}{2}$ in the two-stage Runge-Kutta method and $c_2=\frac{1}{3}$ in the three-stage one. In Figure \ref{ConvergenceHochOster} we show the errors of the approximations obtained by both routines \texttt{phiquadmv\_gauss()} and \texttt{phiquadmv\_cc()} for the three Runge-Kutta methods at the final time $T=1$ (both routines deliver the same convergence results). Here, we work with a fixed mesh with $2^{10}$ elements in each space dimension and we monitor the error behaviour in the infinity norm. We observe the expected order of convergence in time as we refine the time-step up until the error in space becomes dominant, showing that our method is accurate and does not affect the convergence of the exponential integrators.

\begin{table}[h!]
\begin{minipage}{.25\linewidth}
\centering
\begin{tabular}{c|c}
0&\\\hline
&$\varphi_1$
\end{tabular}
\caption*{Exponential Euler}
\end{minipage}
\begin{minipage}{.3\linewidth}
\centering
\begin{tabular}{c|cc}
0&&\\
$c_2$&$c_2\varphi_{1,2}$&\\\hline
&$(1-\frac{1}{2c_2})\varphi_1$&$\frac{1}{2c_2}\varphi_1$
\end{tabular}
\caption*{RK2}
\end{minipage}
\begin{minipage}{.45\linewidth}
\centering
\begin{tabular}{c|ccc}
0&&&\\
$c_2$&$c_2\varphi_{1,2}$&&\\
$\frac{2}{3}$&$\frac{2}{3}\varphi_{1,3}-\frac{4}{9c_2}\varphi_{2,3}$&$\frac{4}{9c_2}\varphi_{2,3}$&\\\hline
&$\varphi_1-\frac{3}{2}\varphi_2$&0&$\frac{3}{2}\varphi_2$
\end{tabular}
\caption*{RK3}
\end{minipage}
\caption{Butcher tableaus corresponding to Exponential Runge-Kutta methods up to order 3. Here $\varphi_{i,j}:=\varphi_{i}(-c_j\tau A)$.}
\label{RK}
\end{table}

\begin{figure}[h!]
\centering
\begin{tikzpicture}
\begin{axis}[small,
	width=1.45*\width,
	height=\height,
	ymode=log,
	xmode=log,
	xlabel=$\tau$,
	ylabel=Error at final time,
	grid=both,
    legend style={font=\small,at={(0.5,1.03)},anchor=south},
    legend entries={Euler,RK2,RK3,Rate 1, Rate 2, Rate 3},
    legend columns=3
	]      
\addplot[
	color=red,dashdotted,line width=1pt, 
	mark=*,mark size=1.5pt,    
	]
table[x=tau,y=Err_EulerKron]{Results/Results_HochOster/ErrHochOster_Kron_r10_m10.txt};
\addplot[
	color=blue,dotted,line width=1pt, 
	mark=*,mark size=1.5pt,    
	]
table[x=tau,y=Err_RK2Kron]{Results/Results_HochOster/ErrHochOster_Kron_r10_m10.txt};
\addplot[
	color=black,dashed,line width=1pt, 
	mark=*,mark size=1.5pt,    
	]
table[x=tau,y=Err_RK3Kron]{Results/Results_HochOster/ErrHochOster_Kron_r10_m10.txt};
\addplot[
	color=red,solid,line width=1pt, 
	]
table[x=tau,y=Slope1]{Results/Results_HochOster/Slope1.txt};
\addplot[
	color=blue,solid,line width=1pt, 
	]
table[x=tau,y=Slope2]{Results/Results_HochOster/Slope2.txt};
\addplot[
	color=black,solid,line width=1pt, 
	]
table[x=tau,y=Slope3]{Results/Results_HochOster/Slope3.txt};
\end{axis}
\end{tikzpicture}
\caption{Convergence in time at $T=1$ of Runge-Kutta methods up to order 3 for Problem 3 computed with both routines \texttt{phiquadmv\_gauss()} and \texttt{phiquadmv\_cc()}. The space mesh is fixed with $2^{10}$ elements in each dimension.}
\label{ConvergenceHochOster}
\end{figure}
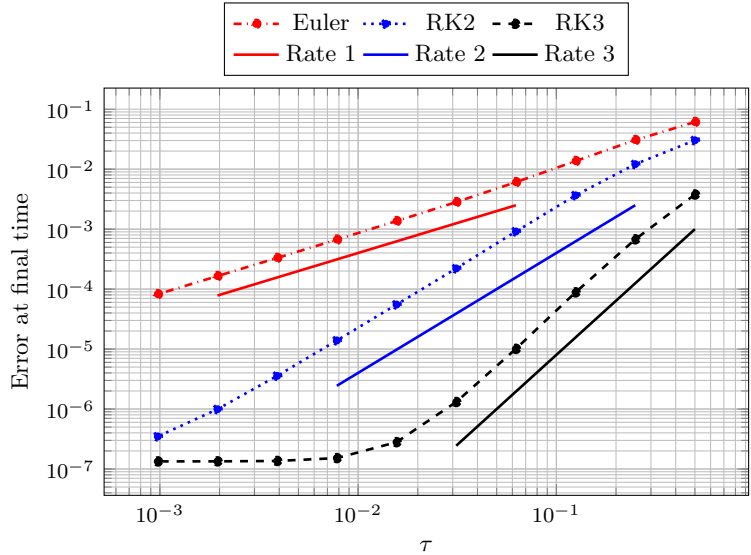

We now compare the efficiency of the methods \texttt{phiquadmv\_gauss()}, \texttt{phiquadmv\_cc()}, and \texttt{expmv()} when used in conjunction with exponential integrators to solve the Hochbruck-Ostermann equation. In Tables \ref{TimesHochOster7} and \ref{TimesHochOster8} we record the total CPU time spent by these routines for different number of time step sizes and for exponential Runge-Kutta methods of order up to $3$. We compare two discretizations in space, fixing $2^7$ and $2^8$ elements per spatial direction, respectively.

We conclude that in all cases \texttt{phiquadmv\_gauss()} and \texttt{phiquadmv\_cc()} accelerate the computation of the exponential time integrators compared to \texttt{expmv()}. Nevertheless, we observe that the growth of the computational time for \texttt{expmv()} is slower as we refine the time step size for a fixed discretization in space. Therefore, the largest gain we obtain with the \texttt{phiquadmv()} routines is when the the time step size is large compared to the discretization in space. Also, we observe that in this case \texttt{phiquadmv\_gauss()} is faster than \texttt{phiquadmv\_cc()} by a factor between two and three, which is consistent with the results from Section \ref{sec:algorithm}.

\begin{remark}
We remark that even though the results presented in this section have been obtained in serial our methods are well-suited for parallelism since computations at different quadrature nodes as well as the squaring of $\varphi_p(A)b$ for different $p$ can be performed independently. We leave a parallel implementation of our routines to future work.
\end{remark}

\begin{table}[h!]
\begin{adjustbox}{width=1\textwidth}
\centering
\begin{tabular}{|c|c|c|c|c|c|c|c|c|c|}\hline
& \multicolumn{3}{c|}{Euler} &  \multicolumn{3}{c|}{RK2} & \multicolumn{3}{c|}{RK3}\\\hline
Time steps&\texttt{phiquadmv\_gauss()}&\texttt{phiquadmv\_cc()}&\texttt{expmv()}&\texttt{phiquadmv\_gauss()}&\texttt{phiquadmv\_cc()}&\texttt{expmv()}&\texttt{phiquadmv\_gauss()}&\texttt{phiquadmv\_cc()}&\texttt{expmv()}\\\hline
2&0.14&0.17&39.26&0.10&0.15&60.24&0.18&0.41&84.95\\
4&0.09&0.12&36.88&0.12&0.22&56.44&0.27&0.76&82.27\\
8&0.12&0.21&37.62&0.22&0.40&56.85&0.57&1.37&83.96\\
16&0.26&0.40&38.40&0.38&0.75&58.18&1.19&2.67&84.94\\
32&0.42&0.77&39.74&0.67&1.55&60.85&1.95&5.19&89.42\\
64&0.65&1.47&41.61&1.24&2.88&64.58&3.47&10.27&94.10\\
128&1.23&2.88&45.26&2.42&5.69&70.63&6.56&20.29&103.34\\
256&2.34&5.64&49.92&4.64&11.20&78.95&12.83&40.26&116.12\\
512&4.55&11.15&58.88&8.94&22.22&95.65&25.17&78.17&146.51\\\hline
\end{tabular}
\end{adjustbox}
\caption{Computational times in seconds Problem 3 for different number of time step sizes. The mesh in space is fixed to $2^7$ elements per space dimension.}
\label{TimesHochOster7}
\end{table}

\begin{table}[h!]
\begin{adjustbox}{width=1\textwidth}
\centering
\begin{tabular}{|c|c|c|c|c|c|c|c|c|c|}\hline
& \multicolumn{3}{c|}{Euler} &  \multicolumn{3}{c|}{RK2} & \multicolumn{3}{c|}{RK3}\\\hline
Time steps&\texttt{phiquadmv\_gauss()}&\texttt{phiquadmv\_cc()}&\texttt{expmv()}&\texttt{phiquadmv\_gauss()}&\texttt{phiquadmv\_cc()}&\texttt{expmv()}&\texttt{phiquadmv\_gauss()}&\texttt{phiquadmv\_cc()}&\texttt{expmv()}\\\hline
2&0.13&0.25&664.50&0.22&0.41&970.01&0.59&1.37&1422.08\\
4&0.22&0.41&676.49&0.40&0.79&973.64&1.05&2.66&1454.27\\
8&0.38&0.78&697.32&0.81&1.50&1007.13&2.10&5.20&1434.99\\
16&0.71&1.47&707.56&1.35&2.90&1045.10&3.93&10.25&1446.83\\
32&1.32&2.87&667.33&2.56&5.76&1014.69&7.74&20.65&1494.18\\
64&2.53&5.65&707.74&5.18&11.31&1056.51&15.06&38.96&1569.51\\
128&4.87&11.07&730.95&10.07&22.51&1111.66&28.89&80.64&1649.21\\
256&9.64&22.56&825.36&19.70&44.81&1226.68&55.00&156.86&1770.38\\
512&19.20&45.55&827.40&38.25&91.02&1277.71&108.33&305.07&1915.17\\\hline
\end{tabular}
\end{adjustbox}
\caption{Computational times in seconds Problem 3 for different number of time step sizes. The mesh in space is fixed to $2^8$ elements per space dimension.}
\label{TimesHochOster8}
\end{table}

\section{Conclusions}\label{sec:coclusions}
We proposed a method that efficiently approximates the action of $\varphi$-functions of matrices with Kronecker sum structure. The algorithm is based on approximating the integral definition of the $\varphi$-functions via either adaptive or fixed-point quadrature combined with a scaling and modified squaring approach. The quadrature rule exploits the Kronecker structure of the matrix and only involves actions of 1D matrix exponentials which can be applied efficiently. Evaluation at different quadrature nodes can furthermore be performed in parallel. Additionally, we provided an \textit{a priori} estimate for the quadrature error which shows that our method converges supergeometrically fast with respect to the number of quadrature nodes. Guided by this result, we also designed a strategy for computing the optimal scaling and number of quadrature points that minimizes the total cost while observing a prescribed error tolerance. Numerical experimentation with 2D/3D time-dependent problems with tensor product structure shows that the new method is accurate, efficient and robust, and is well-suited to be combined with exponential integrators. A comparison with the \texttt{expmv()} state-of-the-art routine from Al-Mohy and Higham revealed that for matrices with Kronecker sum structure our method can accelerate the computation of the actions of $\varphi$-matrix-functions by orders of magnitude.

Possible extensions of this work include: (1) The extension of our method to linear combinations of the actions of different $\varphi$-functions against different vectors (2) A parallel and/or GPU implementation of the algorithm (3) The application of our technique to spatial semidiscretizations  with IGA for which the 1D matrices are dense. 

\section*{Acknowledgements}
Matteo Croci's work is supported by the Department of Energy, NNSA under Award Number DE-NA0003969.
Judit Muñoz-Matute has received funding from the European Union’s Horizon 2020 research and innovation program under the Marie Sklodowska-Curie individual fellowship No.~101017984 (GEODPG).
 
\bibliography{mybibfile}

\end{document}